\documentclass[10pt]{amsart}
\usepackage[english]{babel}
\usepackage{amssymb}
\usepackage{amsmath}
\usepackage{srcltx}
\usepackage{lscape}

\thispagestyle{empty}

\textheight 21.5cm
\textwidth 14cm
\topmargin -0.6cm
\oddsidemargin 1cm
\evensidemargin 1cm

\newcommand{\ignore}[1]{}

\newtheorem{dummy}{Dummy}

\newtheorem{lemma}[dummy]{Lemma}
\newtheorem{theorem}[dummy]{Theorem}
\newtheorem{proposition}[dummy]{Proposition}
\newtheorem{corollary}[dummy]{Corollary}

\theoremstyle{definition}

\newtheorem{example}[dummy]{Example}

\newtheorem{remark}[dummy]{Remark}

\subjclass[2010]{Primary: 17A35. }

\keywords{Real division algebras, Albert's twisted fields, composition algebra.}

\author{S. Pumpl\"un}
\email{susanne.pumpluen@nottingham.ac.uk}
\address{School of Mathematical Sciences\\
University of Nottingham\\
University Park\\
Nottingham NG7 2RD\\
United Kingdom
}

\begin{document}

\title[Twisted algebras]
{Albert's twisted field construction using division algebras with a multiplicative norm}
\maketitle

\begin{abstract}
We generalize Albert's twisted field construction, applying it to unital division algebras with a multiplicative norm.
 We give conditions for the resulting algebras to be division algebras.
  Four- and eight-dimensional real unital and non-unital division
algebras with large derivation algebras are constructed out of Hamilton's quaternion and Cayley's octonion algebra.
\end{abstract}

\section*{Introduction}

Albert's classical construction resulting in twisted  semifields \cite{A1, A2}
 is a well-known tool to build $n$-dimensional unital nonassociative division algebras out of an $n$-dimensional cyclic
 field extension $K/F$  with Galois group
  ${\rm Gal}(K/F)=\langle \sigma\rangle$: For any choice of $c\in K^\times$ such that $N_{K/F}(c)\not=1$ and $1\leq i,j< n$,
   $K$ equipped with  the new multiplication
  $$x\circ y=xy-c\sigma^i(x)\sigma^j(y),$$
 is a division algebra over $F$. For finite base fields $F$, Kaplanski's trick is then used to associate to any such
 presemifield $(K,\circ)$ an isotopic semifield. Any semifield isotopic to $(K,\circ)$ is called a {\it twisted field}.
If $n$ is prime and $q$ large enough, any division algebra of dimension $n$ over
$\mathbb{F}_q$ is either a field or a twisted field \cite{M}. On the other hand, this construction indeed produces fewer than
$o$ non-isotopic semifields of order $o$ \cite{K}.

  In particular,
 the commutative twisted fields play a prominent role in the theory of semifields: For $c=-1$, the division algebra obtained from a
 finite field extension $K/F$ of odd degree, $F$ of odd characteristic, which is
 given by the multiplication $x\circ y=xy+\alpha(x)\alpha^{-1}(y)$ for a non-trivial $\alpha\in {\rm Gal}(K/F)$,
 $\alpha^{-1}\not=\alpha$, is a presemifield.
 Although this $(K,\circ)$ itself is not commutative,
its isotope $x\star y=x\circ \alpha(y)=x\alpha(y)+\alpha(x)y$ is, thus every associated unital isotope
 obtained by using Kaplanski's trick is
 a commutative twisted semifield. All commutative twisted semifields are obtained this way.
This construction works for every base field of odd characteristic  which
admits a cyclic field extension of degree
$n$ and yields a unital central commutative division algebra for all odd $n>2$.

Generalizations of twisted fields also occur in the classification of
three-dimensional nonassociative algebras over a field $F$ \cite{DG}.

The purpose of this paper is to show that Albert's construction can be applied to finite-dimensional algebras over a field $F$ that possess a multiplicative norm $N:A\to F$ of degree $n$, i.e. $N(xy)=N(x)N(y)$ for all $x,y\in A$, and their isotopes. As possible application, we obtain examples of real division algebras with a large derivation algebra.

We focus on unital algebras $A$ with a multiplicative norm.
 We choose $c\in A$ and two  similarities $f,g$ of $N$ to define a new multiplication on $A$.
  Since $A$ need not be commutative or associative, there are several options for a possible generalization of Albert's approach:
The possibilities include defining a new multiplication on $A$ as
 $$x\circ y=xy-c(f(x)g(y)) \text{ or } x\circ y=xy-(cf(x))g(y),$$
but we can also define a multiplicative structure by putting $c$ in the middle, i.e.
 $$x\circ y=xy-(f(x)c)g(y) \text{ or } x\circ y=xy-f(x)(cg(y)),$$
 or on the right-hand side:
$$x\circ y=xy-(f(x)g(y))c \text{ or } x\circ y=xy-f(x)(g(y)c).$$
We can moreover choose to swap around the factors in the second part of the equation and define
 $$x\circ y=xy-c(f(y)g(x))$$
 and so on, exhausting all possible combinations of this type. If $A$ is a division algebra, the right choice
 of $0\not=c\in A$ yields a division algebra $(A,\circ)$.

  We then apply Kaplanski's trick to $(A,\circ)$ to obtain a unital algebra.
  Different choices of elements $d\in A$ used for Kaplanski's trick yield isotopic unital division algebras $(A,*_d)$.
  In the theory of semifields algebras are usually classified only up to isotopy, so the choice of $d$ is not relevant in that setting.
  Choosing the unit element of $A$ when applying Kaplanski's trick to unital algebras helps to compute the inverses
   of the left and right multiplication, so that we can write down the multiplication $*$ explicitly.

After the preliminaries in Section 1, Section 2 explains how to generalize Albert's approach to (isotopes of) algebras with a multiplicative norm and gives criteria when
 the unital {\it twisted algebras} $(A,*)$ are division algebras. We construct examples of unital division algebras $(A,*)$ of dimension
 $n^2>4$ over $F$ which contain a commutative subalgebra of odd degree $n$ 
 provided the field $F$ has odd characteristic and permits a cyclic division algebra of odd degree $n$.

In Section 3 we obtain some first results on the automorphism groups and derivation algebras of some of the $(A,*)$ .
Although our main interest is to understand the twisted algebras, we will then discuss the automorphisms and derivations of some of
 the algebras $(A,\circ)$  in Section 4.
Examples of real division algebras $(A,\circ)$ and $(A,*)$, most of them with large derivation algebras, are given in
Corollary \ref{cor:real}, Examples \ref{ex:1} and \ref{ex:2}.

The problem with the construction probably becomes most apparent in the explicitly computed examples of Section 3:
Although it is rather straightforward to check whether a given twisted algebra is a division algebra or not, it is not easy to
check whether it may or may not be isomorphic to already known algebras due to its usually rather complicated multiplicative
structure.
We leave this more in depth investigation to another paper.

\section{Preliminaries}

Let $F$ be a field.

\subsection{Nonassociative algebras}

 By an ``$F$-algebra'' we mean a finite-dimensional nonassociative algebra over $F$.
 A nonassociative algebra $A\not=0$ is called a {\it division algebra} if for any $a\in A$, $a\not=0$,
the left multiplication  with $a$, $L_a(x)=ax$,  and the right multiplication with $a$, $R_a(x)=xa$, are bijective.
$A$ is a division algebra if and only if $A$ has no zero divisors.

For an $F$-algebra $A$, commutativity is measured by the {\it commutator} $[x, y] = xy - yx$ and
associativity is measured by the {\it associator} $[x, y, z] = (xy) z - x (yz)$.
The {\it left nucleus} of $A$ is defined as ${\rm Nuc}_l(A) = \{ x \in A \, \vert \, [x, A, A]  = 0 \}$, the
{\it middle nucleus} of $A$ is
defined as ${\rm Nuc}_m(A) = \{ x \in A \, \vert \, [A, x, A]  = 0 \}$ and  the {\it right nucleus} of $A$ is
defined as ${\rm Nuc}_r(A) = \{ x \in A \, \vert \, [A,A, x]  = 0 \}$.
Their intersection ${\rm Nuc}(A) = \{ x \in A \, \vert \, [x, A, A] = [A, x, A] = [A,A, x] = 0 \}$ is
 the {\it nucleus} of $A$. The nucleus is an associative
subalgebra of $A$ and $x(yz) = (xy) z$ whenever one of the elements $x, y, z$ is in
${\rm Nuc}(A)$.
 An anti-automorphism $\sigma:A\to A$ of period 2 is called an {\it involution} on $A$.
 Since $A$ is a finite-dimensional algebra over $F$, the Lie algebra of the automorphism group ${\rm Aut}(A)$, viewed as algebraic group,
 is a subalgebra of the derivation algebra ${\rm Der}(A)$ and for $F=\mathbb{R}$, we have
${\rm dim}\,{\rm Aut}(A)={\rm dim}\,{\rm Der}(A)$.

\subsection{Isotopes and Kaplanski's trick}

 Following the notation introduced in \cite[Section 1]{P}, denote the set of possibly non-unital algebra structures on an
 $F$-vector space $V$
by ${\rm Alg}(V)$. Given $A\in {\rm Alg}(V)$, we write $xAy$ for the product of $x,y\in V$ in the algebra in this subsection.

For $f,g,h\in {\rm Gl}(V)$ define the algebra $A^{(f,g,h)}$, called an
{\it isotope} of $A$, as $V$ together with the new multiplication
$$xA^{(f,g,h)}y=h(f(x)A g(y)) \quad\quad x,y\in V.$$
$A^{(f,g,h)}$ is a division algebra if $A$ is a division algebra. Two algebras $A, A'\in {\rm Alg}(V)$ are called {\it isotopic} if
$xAy=h(f(x) A' g(y))$ for all $ x,y\in V.$ If $f=g=h^{-1}$ then $A\cong A'$.

For $h=id$, we call $A^{(f,g)}=A^{(f,g,h)}$ a {\it principal Albert isotope} of $A$.
Division algebras are principal Albert isotopes of unital division algebras \cite[1.5]{P}.

Let $L_a:A\to A, x\mapsto ax$ denote the left multiplication with a non-zero $a\in A$, and $R_a:A\to A, x\mapsto xa$
 the right
 multiplication.
 Fix  non-zero $a,b\in A$. The unital division algebras isotopic to a division algebra $A$ then are, up to isomorphism,
 the algebras $A'$ given by
the new multiplication
$$x A' y=(R_a^{-1}x) A (L_b^{-1}y)$$
for all $x,y\in A$. $A'$ is a division algebra with identity element $aAb$ and is an isotope of
$A$ \cite[Proposition 9]{M}.
If we choose $a=b$, this construction is called {\it Kaplanski's trick}.
Kaplanski's trick applied to an algebra $A$ yields a unital algebra which is an Albert isotope of $A$.
Two unital algebras obtained from $A$ by choosing two elements $a,a'\in A$ and using Kaplanski's trick
with $a$ and $a'$, respectively, are isotopic to each other.

\subsection{Composition algebras} A quadratic form $N_A \colon A \to F$ on an algebra $A$ is {\it multiplicative}  if
$N_A(uv)=N_A(u)N_A(v)$ for all $u,v\in A$.
An algebra $A$ is called a {\it composition algebra} over $F$
if it admits a multiplicative quadratic form $N_A \colon A \to F$. The form $N_A$ is unique  \cite[p.~454 ff.]{KMRT}.
 It is called the {\it norm} of $A$ and we will often just write $N=N_A$.
A unital composition algebra is called a {\it Hurwitz algebra}. Hurwitz algebras are quadratic alternative and $N(1_A)=1$;
the norm of a Hurwitz algebra $C$ is the unique nondegenerate quadratic form
 on $A$ that is multiplicative.
 Hurwitz algebras
exist only in dimensions 1, 2, 4 or 8. Those of dimension~2 are exactly the
quadratic \'etale $F$-algebras, those of dimension 4 exactly the well-known
 quaternion algebras. The ones of dimension 8 are called {\it octonion algebras}. The  scalar involution
 $\overline{x} = T_{C}(x)1_A - x$ of a Hurwitz algebra $C$
 is called the {\it standard involution} of $C$, where $T_A \colon A \to F$, $T_{A}(x) = N_{A}
 (1_A, x)$, is the {\it trace} of $A$.

Every composition algebra is a principal Albert isotope of a Hurwitz algebra: There are isometries
$\varphi_1,\varphi_2$ of
the norm $N_C$ for a suitable Hurwitz algebra $C$ over $F$ such that its multiplication can be written as
$x\star y=\varphi_1(x)C\varphi_2(y)$
\cite{KMRT}.

\section{How to obtain division algebras out of a given division algebra with multiplicative norm using Albert's approach}

Unless stated otherwise, let $A$ be a
finite-dimensional nonassociative division algebra over $F$,  not necessarily unital, which possesses a nondegenerate multiplicative norm
 $N:A\to F$ of degree $n$. Let $O(N)$ denote the group of
 isometries and $S(N)$ the group of similarities of $N$.
 We choose a non-zero $c\in A$ and some $h_i,h,f,g\in S(N)$ with similarity factors
 $\gamma_i,\gamma,\alpha,\beta\in F^\times$, $1\leq i\leq 3$, respectively.

\subsection{The general construction}

Take the isotope $(A,\cdot)=A^{(h_1,h_2,h_3)}$ of $A$, then $N(x \cdot y)=\gamma_1\gamma_2\gamma_3N(x)
N(y)$ for all $x,y\in A$. If $N$ is anisotropic then it is straightforward to see that $(A,\cdot)$ is a division algebra.

 Generalizing Albert's approach, we have many options to define  new multiplications $\circ$ on $(A,\cdot)$, for instance via
 $$x\circ y=x\cdot y-c\cdot h(f(x)\cdot g(y)),$$
 $$  x\circ y=x\cdot y-h((c\cdot f(x))\cdot g(y)),$$
 $$x\circ y=x\cdot y-h(f(x)\cdot c)\cdot g(y),$$
 $$   x\circ y=x\cdot y-h(f(x)\cdot (c\cdot g(y))),$$
  $$x\circ y=x\cdot y-h((f(x)\cdot g(y))\cdot c),$$
  $$  x\circ y=x\cdot y-h(f(x)\cdot (g(y)\cdot c))\text{ etc. }$$
for all $x,y\in A$.
For noncommutative algebras, we can also swap around the factors $f(x)$ and $g(y)$ in the second part of the equation.
 Albert's proof applied to this more general setup yields the following result:

\begin{theorem} \label{thm:main}
Let $A$ be an algebra over $F$ with an anisotropic multiplicative norm $N$ of degree $n$
and  $(A,\cdot)=A^{(h_1,h_2,h_3)}$. If
$$N(c)\not= \frac{1}{\alpha\beta \gamma \gamma_1 \gamma_2 \gamma_3},$$
then $(A,\circ)$ is a division algebra over $F$.
\end{theorem}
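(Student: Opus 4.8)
The plan is to prove that $(A,\circ)$ has no zero divisors; since $A$ is finite-dimensional over $F$, this is equivalent to $(A,\circ)$ being a division algebra by the criterion recalled in Section~1 (no zero divisors $\Leftrightarrow$ all $L_a,R_a$ bijective). The unital statement will then follow formally from the isotopy properties of Kaplanski's trick. The only structural facts I would use are multiplicativity, $N(uAv)=N(u)N(v)$ for the original product, and the scaling of similarities, $N(h_i(x))=d_iN(x)$, $N(h(x))=dN(x)$, $N(f(x))=\alpha N(x)$, $N(g(x))=\beta N(x)$. Combining these with $x\cdot y=h_3(h_1(x)\,A\,h_2(y))$ reproduces the already-noted identity $N(x\cdot y)=d_1d_2d_3\,N(x)N(y)$ for the isotope $(A,\cdot)$.

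Next I would argue by contradiction. Suppose $x\circ y=0$ for some nonzero $x,y$. Taking for concreteness the first multiplication, this reads $x\cdot y=c\cdot h(f(x)\cdot g(y))$, an \emph{identity between two elements of $A$}, so their norms agree — note that I never need to expand $N$ of a difference, which matters since $N$ has degree $n$. The left-hand side has norm $d_1d_2d_3\,N(x)N(y)$. For the right-hand side I would peel off the operations from the inside out: $N(f(x)\cdot g(y))=d_1d_2d_3\,\alpha\beta\,N(x)N(y)$, applying $h$ multiplies by $d$, and forming the $(A,\cdot)$-product with $c$ multiplies by a further $d_1d_2d_3\,N(c)$, so the subtracted term has norm $(d_1d_2d_3)^2\,d\,\alpha\beta\,N(c)\,N(x)N(y)$. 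Since $N$ is anisotropic and $x,y\neq 0$, the scalar $N(x)N(y)$ is nonzero and may be cancelled, together with one factor $d_1d_2d_3$, leaving $1=d_1d_2d_3\,d\,\alpha\beta\,N(c)$, that is $N(c)=1/(\alpha\beta\,d\,d_1d_2d_3)$. This contradicts the hypothesis, so $(A,\circ)$ has no zero divisors and is a division algebra.

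The main point to verify — really the only (mild) obstacle — is that this single computation simultaneously covers every variant of $\circ$ listed before the statement. This is exactly where full multiplicativity of $N$ is used. Whatever the bracketing, and wherever $c$ is inserted, each subtracted term is assembled from precisely one factor $c$, one application of each of $f,g,h$, and two $(A,\cdot)$-products (in the swapped versions $f(x),g(y)$ are replaced by $f(y),g(x)$, which leaves $N(f(\cdot))N(g(\cdot))=\alpha\beta\,N(x)N(y)$ unchanged). By multiplicativity the norm of such a term is insensitive both to the association of the factors and to the order of $x,y$, and a direct count of the accumulated factors $d,\alpha,\beta,N(c)$ and $(d_1d_2d_3)^2$ gives the same value $(d_1d_2d_3)^2\,d\,\alpha\beta\,N(c)\,N(x)N(y)$ in every case. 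Hence each variant yields the identical obstruction $N(c)=1/(\alpha\beta\,d\,d_1d_2d_3)$, and excluding that value makes all of them division.

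Finally, for the unital claim I would invoke the description of Kaplanski's trick from Section~1: for $a\in A^\times$ (nonzero in the division algebra $(A,\circ)$, so that $L_a,R_a$ are bijective), the multiplication $x*_a y=(R_a^{-1}x)\circ(L_a^{-1}y)$ realises $(A,*_a)$ as an Albert isotope of $(A,\circ)$ with identity $a\circ a$. Since every algebra isotopic to a division algebra is again a division algebra, $(A,*_a)$ is a unital division algebra over $F$, which completes the proof.
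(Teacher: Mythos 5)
Your proposal is correct and takes essentially the same route as the paper: assume $x\circ y=0$ for nonzero $x,y$, rearrange so that $N$ is never applied to a difference, take norms of both sides using multiplicativity and the similarity factors, and cancel the nonzero scalar $N(x)N(y)$ (anisotropy) to obtain $N(c)=1/(\alpha\beta d\, d_1d_2d_3)$, contradicting the hypothesis; the unital claim then follows from the isotopy properties of Kaplanski's trick. Your explicit bookkeeping showing that every variant of $\circ$ accumulates exactly the factors $(d_1d_2d_3)^2 d\,\alpha\beta\,N(c)$ regardless of bracketing or swapping merely spells out what the paper compresses into ``this argument applies to all multiplications $\circ$ of the type indicated above.''
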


\begin{proof}
 Since $N$ is anisotropic, the isotope $(A,\cdot)$ is a division algebra.
Let $x,y\in A$ be non-zero. Suppose that $x\circ y=0$ and rearrange the resulting equation so that one side
consists of the term $xy$.
Apply the norm on both sides.  Cancelling the
non-zero term $N(x)N(y)$, we obtain an equation that contradicts our assumption. This argument applies to all multiplications $\circ$ of the types
indicated above. For instance, if $x\circ y=x \cdot y-c \cdot h(f(x)\cdot g(y))$ then this
 yields $\gamma_1 \gamma_2 \gamma_3 N(x)N(y)=  \gamma_1 \gamma_2 \gamma_3 N(c) \gamma \gamma_1 \gamma_2 \gamma_3 N(f(x))N(g(y))$,
 thus $N(x)N(y)= \gamma \gamma_1 \gamma_2 \gamma_3\alpha \beta N(c) N(x)N(y)$.
\end{proof}

Considering isotopes of algebras with a multiplicative norm in the construction does not necessarily yield a larger class of algebras, if we only study them up to isotopy, as the following example shows:

\begin{example}
 Let $A$ be a unital algebra over $F$ with an anisotropic multiplicative norm $N$ of degree $n$
and  $(A,\cdot)=A^{(h_1,h_2,h_3)}$.
For instance, consider $x\circ y=x \cdot y-c \cdot(f(x)\cdot g(y))$ then
$$x\circ y=x \cdot y-c \cdot(f(x)\cdot g(y))=h_3(h_1(x)h_2(y))-c \cdot    (h_3(      h_1(f(x))  h_2(g(y))  ))$$
$$=h_3(h_1(x)h_2(y))-h_3(h_1(c)h_2( (h_3(  h_1(f(x))  h_2(g(y))  ))).$$
Now note that
$$h_3^{-1}(h_1^{-1}(x)\circ h_2^{-1}(y))=x y-h_1(c) h_2(  h_3( h_1(f(h_1^{-1}(x))))  h_2(g( h_2^{-1}(y) ))  ))$$
$$= xy-d h_2((h_3(h_1(f(h_1^{-1}(x)))h_2(g( h_2^{-1}(y) )))), $$
 therefore here $(A,\circ)$ is isotopic to an algebra $(A,\diamond)$ with multiplication
$x\diamond y= xy-d H( F(x)G(y) ),$
where $d\in A$ and $H,F,G\in S(N)$ are suitably chosen.
\end{example}

\subsection{The construction starting with a unital algebra}

We will only look at the possible multiplications which occur when we apply the construction to a unital algebra $A$ with a multiplicative norm. Let  $F$ be a field of characteristic 0 or $> d$. Then the
 unital division algebras with multiplicative norms are exactly the Hurwitz division
algebras and the central simple associative division algebras over  separable field extensions $K$ of $F$ \cite{Sch}.
All have anisotropic nondegenerate norms.

From now on let
$$A  \text{ be a unital division algebra over }  F  \text{ with  multiplicative norm } N \text{ of degree }n.$$
The multiplication of $A$ will be denoted by juxtaposition.
 Let $f,g\in S(N)$ with similarity factors $\alpha,\beta\in F^\times$, respectively.  Theorem \ref{thm:main} yields:

\begin{corollary}  \label{cor:important}
 If
 $N(c)\not= \frac{1}{\alpha\beta},$
  then $(A,\circ)$ is a division algebra over $F$.
  \end{corollary}

 \begin{theorem} \label{thm:main3} (cf. \cite[p.~85]{M})
 Let $K$ be a cyclic field extension of $F$ of degree $n$ with norm $N_K$ and ${\rm Gal}(K/F)=\langle \sigma\rangle.$
  For $c\in K^\times$, define
  $$x\circ y= xy-c\sigma^s(x)\sigma^t(y), \quad 0\leq s,t\leq n-1.$$
  If $s$ or $t$ is prime to $n$ and $(K,\circ)$ is a division algebra then $N_K(c)\not=1$.
\end{theorem}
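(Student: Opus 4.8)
The plan is to prove the contrapositive: assuming $N_K(c)=1$, I will exhibit a nonzero zero divisor in $(K,\circ)$, which by the zero-divisor characterization recalled in Section 1 shows that $(K,\circ)$ is \emph{not} a division algebra. Note that the norm argument used in the proof of Theorem \ref{thm:main} only yields the implication ``$(K,\circ)$ has zero divisors $\Rightarrow N_K(c)=1$''; here I need the opposite direction, so I must actually construct zero divisors, and this is precisely where the hypothesis that $s$ or $t$ be prime to $n$ will enter.

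First I would reduce to a single case. Since the defining relation $x\circ y=xy-c\sigma^s(x)\sigma^t(y)$ is symmetric under simultaneously interchanging $x$ with $y$ and $s$ with $t$, it suffices to treat the case $\gcd(s,n)=1$; the case $\gcd(t,n)=1$ then follows verbatim with the roles of $x$ and $y$ exchanged. The key structural observation is that when $\gcd(s,n)=1$ the power $\sigma^s$ is again a generator of the cyclic group ${\rm Gal}(K/F)$.

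Next I would invoke Hilbert's Theorem 90 for the generator $\sigma^s$. Because $N_K=N_{K/F}$ is the product over all conjugates it is $\sigma$-invariant, so the norm computed with respect to $\sigma^s$ coincides with $N_K$; the assumption $N_K(c)=1$ therefore guarantees, via Hilbert 90, a nonzero $x\in K^\times$ with $x=c\,\sigma^s(x)$, equivalently $c=x/\sigma^s(x)$. Setting $y=1$ and using $\sigma^t(1)=1$, I would then compute
$$x\circ 1=x\cdot 1-c\,\sigma^s(x)\,\sigma^t(1)=x-c\,\sigma^s(x)=0,$$
so $x\neq 0$ and $1\neq 0$ are a pair of zero divisors, contradicting that $(K,\circ)$ is a division algebra.

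The computation itself is routine; the only substantive point, and the step I expect to be the crux, is recognizing that coprimality of $s$ (or $t$) with $n$ is exactly what makes $\sigma^s$ a generator of the Galois group, so that Hilbert 90 applies and produces an element realizing $c$ as a ratio $x/\sigma^s(x)$. Without that coprimality the image of the map $x\mapsto x/\sigma^s(x)$ is strictly smaller than the full group of norm-one elements, and the construction of a zero divisor can break down, so the hypothesis is genuinely needed rather than merely convenient.
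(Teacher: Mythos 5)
Your proof is correct. There is in fact nothing in the paper to compare it against: the theorem is stated with a pointer to Menichetti \cite{M} and never proved internally, so your argument supplies the missing converse to the norm criterion of Theorem \ref{thm:main}, and it does so by the standard mechanism. The crux is exactly where you locate it: coprimality of $s$ with $n$ makes $\sigma^s$ a generator of ${\rm Gal}(K/F)$, hence the norm formed from the powers of $\sigma^s$ is the full norm $N_{K/F}$; Hilbert 90 for this generator turns $N_K(c)=1$ into $c=x/\sigma^s(x)$ for some $x\in K^\times$, and then $x\circ 1=x-c\,\sigma^s(x)\,\sigma^t(1)=0$ exhibits a zero divisor, contradicting that $(K,\circ)$ is division via the zero-divisor characterization recalled in Section 1. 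Two minor comments. Your symmetry reduction is legitimate precisely because $K$ is commutative, so interchanging $x$ with $y$ and $s$ with $t$ passes to the opposite algebra $(K,\circ)^{op}$, which is division if and only if $(K,\circ)$ is; but you do not actually need it: if $\gcd(t,n)=1$, apply Hilbert 90 to the generator $\sigma^t$ to get $c=y/\sigma^t(y)$ and compute $1\circ y=y-c\,\sigma^t(y)=0$ directly. And your closing remark is sound: when $\gcd(s,n)=d>1$, the image of $x\mapsto x/\sigma^s(x)$ is only the kernel of the relative norm of $K$ over the fixed field of $\langle\sigma^s\rangle$, and the statement genuinely fails without the hypothesis --- for $s=t=0$ one has $x\circ y=(1-c)xy$, which is division for every $c\neq 1$, including elements of norm one.
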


From Theorems \ref{thm:main} and \ref{thm:main3} we obtain:

\begin{theorem}  \label{thm:important}
Let $K/F$ be a cyclic field extension  of degree $n$  with ${\rm Gal}(K/F)=\langle \sigma\rangle$,
which is a subalgebra of $A$. Let $c\in K^\times$. Suppose that
$f|_{K},g|_{K}\in S(N_{K/F})$  and
 $f|_{K}(x)= a\sigma^s(x)$, $g|_{K}(x)= b\sigma^t(x)$, where $a,b\in K^\times$, $0\leq s,t\leq n-1,$
 and $s$ or $t$ is prime to $n$.
 Then
$$(A,\circ) \text{ is a division algebra if and only if } N_{K/F}(abc)\not=1.$$
  \end{theorem}

\begin{proof}
 Obviously, here $\alpha=N(a)$ and $\beta=N(b)$.
 By Corollary \ref{cor:important}, $(A,\circ)$ is a division algebra  if $N(c)\not=1/N(a)N(b)$, i.e. $N_{K/F}(abc)\not=1$.
 \\ Conversely, suppose that $(A,\circ)$ is a division algebra.
 Since $c\in K^\times$, $(K,\circ)$ is a subalgebra of $(A,\circ)$ with multiplication
$x\circ y=xy-abc \sigma^s(x) \sigma^t(y).$ By assumption, $s$ or $t$ is prime to $n$, and $(K,\circ)$ is a division algebra, so by Theorem
 \ref{thm:main3} 
it follows that $N_{K/F}(abc)\not=1$.
\end{proof}

 \begin{corollary}
 Let $A$ be a quaternion or octonion division algebra over $F$. Let  $c\in A$ such that $K=F(c)$ is a separable quadratic
field extension with non-trivial automorphism $\sigma$.  Suppose that
 $f|_{K}(x)= a\sigma^s(x)$, $g|_{K}(x)= b\sigma^t(x)$ for all $x\in K$, where $a,b\in K^\times$, and $s=1$ or $t=1$. Then
$(A,\circ)$ is a division algebra if and only if $N_{K/F}(abc)\not=1$.
 \end{corollary}

\begin{corollary}\label{cor:cyclic}
Let $K/F$ be a Galois field extension of degree $n\geq 2$ with ${\rm Gal}(K/F)=\langle \sigma\rangle$ and
$A=(K/F,\sigma,d)$ be a cyclic division algebra over $F$ of degree $n$. Let $c\in K^\times$ and $f|_K,g|_K\in S(N_{K/F})$.
Let $f|_{K}(x)= a\sigma^s(x)$, $g|_{K}(x)= b\sigma^t(x)$, where $a,b\in K^\times$, and $s$, $t$ be integers, $0\leq s,t\leq n-1$, with $s$ or $t$ prime to $n$. Then $(A,\circ)$ is a division algebra if and only if $N_{K/F}(abc)\not=1$.
\end{corollary}

To obtain a unital algebra $(A,*)$ from $(A, \circ)$  we apply Kaplanski's trick.
Different choices of $d\in A$ used for Kaplanski's trick yield isotopic unital division algebras $(A,*_d)$.
  In the theory of semifields this is not relevant as division algebras over finite fields are usually classified up to isotopy.
Let $L_x$ and $R_y$ be the left and right multiplication
on $(A,\circ)$ and let $e=1_A$ be the unit element of $A$. Define a  multiplication using Kaplanski's trick via
$$x*y=R_e^{-1}(x)\circ L_e^{-1}(y)$$
for all $x,y\in A$. We call the unital algebra $(A,*)$  a {\it twisted algebra}. By construction,
$(A,*)$ is isotopic to $(A,\circ)$, and is a division algebra if and only if so is $(A,\circ)$.
Note that choosing the unit element of $A$ when applying Kaplanski's trick to unital algebras helps to compute the inverses
   of the left and right multiplication for when we want to write down the multiplication $*$ explicitly.

We note that Menichetti gives a geometric condition for when a division algebra of dimension $n$ is isomorphic to a twisted algebra,
involving its left and right zero divisor hypersurfaces
\cite[Corollary 32]{M}.

\begin{lemma}
Let  $B$ be a subalgebra of $A$ with norm $N_B$ and $0\not=c\in {B}$.
Assume $f|_{B},g|_{B}\in S(N_{B})$. Then $(B,*)$ is a unital subalgebra of $(A,*)$.
\end{lemma}

\begin{proof}
Let $x,y\in B$. Since $A$ is unital, clearly $e=1_A\in B$ and $(B,\circ)$ is a subalgebra of $(A,\circ)$ by construction.
We know that $x\circ e,$ $e\circ y\in B$, and thus the restricted maps
$R_e:B\to B$, $L_e:B\to B$ are isomorphisms onto $B$. We conclude that $x*y=R_e^{-1}(x)\circ L_e^{-1}(y)\in B$ as well.
\end{proof}

 \begin{corollary}\label{prop:subtwistedfield0}
 Let $A$ be a quaternion or octonion division algebra over $F$. Choose $c\in A$ such that $K=F(c)$ is a separable quadratic
field extension with  automorphism $\sigma$.  Suppose that
 $f|_{K}(x)= a\sigma^s(x)$, $g|_{K}(x)= b\sigma^t(x)$ for all $x\in K$, where $a,b\in K^\times$ and $0\leq s,t\leq 1$.
 Then
$(A,*)$ is a division algebra if and only if $N_{K/F}(abc)\not=1$. \\
 In particular, $(A,*)$ contains the two-dimensional  subalgebra
$(K,*)$.
 \end{corollary}

For $n>2$, unital division algebras of dimension $n^2$ containing isotopes of commutative
twisted algebras of dimension $n$ as subalgebras can be constructed out of cyclic division algebras as follows:

\begin{proposition}\label{prop:subtwistedfield}
Let $K/F$ be a Galois field extension of degree $n\geq 2$ with ${\rm Gal}(K/F)=\langle \sigma\rangle$ and
$A=(K/F,\sigma,d)$ be a cyclic division algebra over $F$ of degree $n$. Let $c\in K^\times$ and $f|_K,g|_K\in S(N_{K/F})$.
 Let $f|_{K}(x)= a\sigma^s(x)$, $g|_{K}(x)= b\sigma^t(x)$, where $a,b\in K$, and $s$, $t$ are integers, $0\leq s,t\leq n-1$, with $s$ or $t$ prime to $n$. Then $(A,*)$ is a unital division algebra if and only if $N_{K/F}(abc)\not=1$. \\
If additionally $s\not=t$, $s\not=0$, $t\not=0$, then $(A,*)$ has the twisted algebra
$(K,*)$ as a subalgebra. 
\\  In particular, suppose $n$ is odd, $s+t=n$, $s\not=0$, $t\not=0$ and $abc=-1$. Then
 the subalgebra $(K,*)$ of $(A,*)$ is isotopic to an $n$-dimensional commutative twisted algebra.
\end{proposition}

\begin{proof}
 For all $c\in A$ such that $N_{K/F}(abc)\not=1$,
the twisted algebra $(A,*)$ 
is a division algebra by Theorem \ref{thm:main}.
 Since   $c\in K^\times$ and $f|_K,g|_K\in S(N_K)$, $(A,\circ)$ contains the $n$-dimensional subalgebra $(K,\circ)$, and
   $(A,*)$ contains the $n$-dimensional unital subalgebra $(K,*)$.
 Moreover, $(A,\circ)$ and thus
 $(A,*)$ are division algebras if and only if $N(abc)\not=1$ (Theorem \ref{thm:important}).
 If additionally $s\not=t$, $s\not=0$, $t\not=0$, then the subalgebra 
$(K,*)$ is a twisted division algebra.

If $n$ is odd, $s+t=n$, $s\not=0$, $t\not=0$ and $abc=-1$,
then $(K,\circ)$, 
$x\circ y=xy+\sigma^s(x) \sigma^{-s}(y)$, is a subalgebra of $(A,\circ)$.
The isotope $(K,\star)$ given by
$x \star y=x\circ \sigma^s(y)=x\sigma^s (y)+\sigma^s(x) y$ of $(K,\circ)$ is a commutative algebra, and Kaplanski's trick applied to $(K,\star)$ yields a unital $n$-dimensional commutative division algebra.
\end{proof}

More precisely, we now define $\circ_i$ on $A$ by
\begin{enumerate}
\item $x\circ_{(1)} y=xy-c(f(x)g(y)),$
\item $  x\circ_{(2)} y=xy-(cf(x))g(y),$
\item $x\circ_{(3)} y=xy-(f(x)c)g(y),$
\item $   x\circ_{(4)} y=xy-f(x)(cg(y)),$
 \item $x\circ_{(5)} y=xy-(f(x)g(y))c,$
 \item $  x\circ_{(6)} y=xy-f(x)(g(y)c)$
\end{enumerate}
for all $x,y\in A$.
For noncommutative algebras $A$, we can swap around the factors in the second part of the equation and  define
\begin{enumerate}
\setcounter{enumi}{6}
\item $x\circ_{(7)} y=xy-c(f(y)g(x)),$
\item $x\circ_{(8)} y=xy-(cf(y))g(x),$
\item $x\circ_{(9)} y=xy-(f(y)c)g(x),$
\item $  x\circ_{(10)} y=xy-f(y)(cg(x)),$
\item $x\circ_{(11)} y=xy-(f(y)g(x))c,$
\item $ x\circ_{(12)} y=xy-f(y)(g(x)c)$
\end{enumerate}
for all $x,y\in A$. 
Note that if $B$ is a subalgebra of $A$, $f|_{B},g|_{B}\in S(N_{B})$ and $c\in {B}^\times$ then $(B,\circ)$ is a subalgebra of $(A,\circ)$.

If the twisted algebra $(A,*)$ is obtained from $(A,\circ_{(i)})$, we denote it by $(A,*_{(i)})$.
We still write $(A,*)=(A,*_{(i)})$, $i=1,\dots,12$, for the unital algebra obtained by applying Kaplanski's trick employing the unit element of $A$, if it is clear from the context which multiplication $*_{(i)}$ we use.

Note that all the possible multiplications $\circ_{(i)}$ in Proposition \ref{prop:subtwistedfield} are given by
$i=1,3,5,7,9,11$
since $A$ is associative here. All of them become the same multiplication on the subalgebra $(K,\circ)$.

\begin{remark}
 The algebras $(A,\circ_{(i)})$ we construct can be isotopic or even isomorphic to  $A$:
 Suppose that $f=g\in {\rm Aut}(A)$. Put $G(x)=x-f(x) c$. Then
$$  x\circ_{(1)} y=xy-c(f(x)f(y))=
(id -cf)(xy) \text{ and  } x\circ_{(5)} y=xy-f(xy)c=G(xy),$$
and for all $c\in A$ such that $N(c)\not=1$,  $(A,\circ_{(1)})$ and $(A,\circ_{(5)})$ are  division algebras by Theorem \ref{thm:main}.
 However, $(A,\circ_{(1)})$ and $(A,\circ_{(5)})$ are  isotopic to $A$, since
 the maps $F=id- cf$ and $G(x)=x-f(x) c$ are bijective if  $N(c)\not=1$.  Hence if $A$ is associative, we obtain
$(A,*_{(1)})\cong (A,*_{(5)})\cong A$. 
The same argument applies when $A$ is a quaternion algebra and $f=g=\sigma$ is the canonical involution of $A$. Again, $(A,*_{(1)})\cong (A,*_{(5)})\cong A$.
\end{remark}

The question when two division algebras we construct are not isotopic will be dealt with in another paper.

\section{Some observations on the automorphisms and derivations of the unital division algebras $(A,*)$}

Let $A$ be a unital division algebra with (anisotropic) multiplicative norm $N$, $0\not=c\in A$  and $f, g\in S(N)$.
We look at a few special cases where we can say something on the automorphism group and the derivation algebra of $(A,*)$.

\subsection{Central simple algebras}

Let $A$ be a central simple division algebra over $F$ of degree $n$.

\begin{remark} \label{ex:isom}
The multiplications
$$ x\circ_{(1)} y=xy-cf(x)y=(id-cf)(x)y,$$
$$ x\circ_{(1)} y=xy-cxg(y)=x(id-cg)(y)
\text{ for } c\in F^\times,$$
$$ x\circ_{(1)} y=xy-cf(x)f(y)=(id-cf)(xy) \text{ if } f\in {\rm Aut}(A),$$
$$  x\circ_{(5)} y=xy-xg(y)c=x(y-g(y)c),$$
  $$  x\circ_{(5)} y=xy-f(x)f(y)c=T(xy)  \text{ if }  f\in {\rm Aut}(A),  \text{ with }  T(z)=z-f(z)c,$$
$$  x\circ_{(7)} y=xy-c f(y)f(x)=(id-cf)(xy)  \text{ if }  f(xy)=f(y)f(x), $$
$$    x\circ_{(11)} y=xy-f(y)f(x)c=xy-f(xy)c=S(xy)  \text{ if } f(xy)=f(y)f(x) \text{ with } S(z)=z-f(z)c,$$
all yield isotopes of $A$, provided the  maps $id-cf$, $T$, $S$ etc. are bijective, which they are when we choose $c\in A$ suitable
for $(A,\circ)$ to be division algebras.
Therefore in these  cases, the twisted unital algebras $(A,*)$
 are isotopic to the unital associative algebra $A$, thus have the same nucleus, and therefore are
 isomorphic to $A$.
\end{remark}

Suppose $f,g\in {\rm Aut}(A)$, $f\not=id$, $g\not=id$.

Let $\tau$ be an involution on $A$ and $c\in F^\times$, $c\not=\pm 1$, and
$$(7.1)\quad x\circ_{(7.1)} y=xy-c\tau(y)x,$$
$$(7.2)\quad x\circ_{(7.2)} y=xy-cy\tau(x),$$
$$(7.3)\quad x\circ_{(7.3)} y=xy-c\tau(x)\tau(y).$$
Then we can explicitly compute
$$(7.1)\quad x*_{(7.1)}y
= \frac{1}{(1-c)(1-c\tau(c))}( xy-c\tau(y)x+cx\tau(y)-c^2yx), $$
$$(7.2)\quad x*_{(7.2)}y
=\frac{1}{(1-c)(1-c\tau(c))}(xy-cy\tau(x)+c\tau(x)y-c^2 yx ),$$
$$(7.3)\quad x*_{(7.3)}y
= (1-c\tau(c))^{-2} ( (1-c\tau(c)^{2})xy-c(1-c)\tau(x)\tau(y)+c(1-\tau(c))(x\tau(y)+\tau(x)y)  ). $$

An easy calculation shows:

\begin{proposition}\label{prop:invol}
For the multiplications $*_{(7.i)}$, $i\in\{1,2,3\}$, we obtain:
\\ (i) If $H\in {\it Aut}(A)$ such that $H\circ\tau=\tau\circ H$, then $H\in {\rm Aut}(A,*_{(7.i)})$.
\\ (ii) If $D\in {\rm Der}(A)$ such that $D(\tau(x))=\tau(D(x))$ for all $x\in A$, then
$D\in {\rm Der}(A,*_{(7.i)})$.
\end{proposition}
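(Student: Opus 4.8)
The plan is to exploit the fact that each of the three multiplications $*_{(7)}$ displayed in $(7.1)$, $(7.2)$ and $(7.3)$ is, by those very formulas, an $F$-linear combination of products of the shape $xy$, $yx$, $x\tau(y)$, $\tau(y)x$, $\tau(x)y$, $y\tau(x)$ and $\tau(x)\tau(y)$, with \emph{all} scalar coefficients lying in $F$. Indeed, since $c\in F^\times$ and any involution maps the centre $F$ to itself, we have $\tau(c)\in F$ as well, so the prefactors $\frac{1}{(1-c)(1-c\tau(c))}$ and $(1-c\tau(c))^{-2}$, together with the coefficients $1-c\tau(c)^{2}$, $c(1-c)$ and $c(1-\tau(c))$, all belong to $F$. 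Thus $*_{(7)}$ is assembled solely from the associative product of $A$, the involution $\tau$, and multiplication by scalars from $F$. The verification then reduces to checking that both an automorphism $F$ commuting with $\tau$ and a derivation $D$ commuting with $\tau$ respect each of these three ingredients; the argument is uniform across $(7.1)$--$(7.3)$.

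For part (i), let $F\in{\rm Aut}(A)$ with $F\circ\tau=\tau\circ F$. As an $F$-algebra automorphism, $F$ is $F$-linear, multiplicative, and fixes $F$ pointwise; in particular $F(c)=c$ and $F(\tau(c))=\tau(c)$, so no separate hypothesis $F(c)=c$ is needed — this is exactly where $c\in F^\times$ simplifies matters compared with Proposition \ref{prop:aut}. I would then apply $F$ to the explicit formula for $x*_{(7)}y$ and push it inward: through the scalar prefactor and coefficients (which it fixes), through each product via $F(uv)=F(u)F(v)$, and through each occurrence of $\tau$ via $F(\tau(u))=\tau(F(u))$. The outcome is term for term the formula for $F(x)*_{(7)}F(y)$, giving $F(x*_{(7)}y)=F(x)*_{(7)}F(y)$, hence $F\in{\rm Aut}(A,*_{(7)})$.

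For part (ii), let $D\in{\rm Der}(A)$ with $D(\tau(x))=\tau(D(x))$. Being $F$-linear, $D$ annihilates $F$ (since $D(1)=0$), so it passes through the scalar prefactor and treats $c,\tau(c)$ as constants; it satisfies the Leibniz rule for the associative product and commutes with $\tau$ by hypothesis. Applying $D$ to the explicit formula for $x*_{(7)}y$, distributing over the $F$-linear combination, applying Leibniz to each product to split it into two summands, and rewriting $D(\tau(u))=\tau(D(u))$ throughout, I would then regroup: the terms carrying $D(x)$ reassemble into $D(x)*_{(7)}y$ and those carrying $D(y)$ into $x*_{(7)}D(y)$. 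This yields $D(x*_{(7)}y)=D(x)*_{(7)}y+x*_{(7)}D(y)$, so $D\in{\rm Der}(A,*_{(7)})$.

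There is no genuine obstacle here beyond bookkeeping, which is why the statement can be dispatched by ``an easy calculation''. The one point demanding care is the one flagged above: one must confirm at the outset that every scalar appearing in the formulas for $*_{(7)}$ lies in $F$ — this is what makes those scalars invariant under $F$ and inert under $D$ — and that $c,\tau(c)$ are central, so that reordering scalars past algebra elements is legitimate; both follow from $c\in F^\times$ and $\tau(F)=F$. Once this is secured, the only mildly error-prone step is the regrouping in part (ii), where each product contributes two Leibniz terms that must be correctly sorted between the summands $D(x)*_{(7)}y$ and $x*_{(7)}D(y)$.
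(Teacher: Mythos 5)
Your proposal is correct and is precisely the ``easy calculation'' the paper alludes to (the paper gives no further proof): one applies $F$ (resp.\ $D$) to the explicit formulas $(7.1)$--$(7.3)$, notes that every scalar coefficient lies in $F$ and is therefore fixed by $F$ and annihilated by $D$, pushes through each product by multiplicativity (resp.\ Leibniz) and through each $\tau$ by the commutation hypothesis, and regroups. Your observation that $c\in F^\times$ makes the hypothesis $F(c)=c$ (resp.\ $D(c)=0$) automatic, unlike in Proposition~\ref{prop:aut}, is exactly the point of the separate statement.
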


\begin{corollary}
 Suppose $A$ contains a quaternion subalgebra $C$ with canonical involution $\sigma$ and assume that
$\tau|_C=\sigma$. For the multiplications $*_{(7.i)}$, $i\in\{1,2,3\}$, we obtain:
\\ (i) $\{H\in {\rm Aut}(A)\,|\, H(x)=dxd^{-1}, \, d\in C\}\subset {\rm Aut}(A,*_{(7.i)})$ and
${\rm Aut}(A,*_{(7.i)})$
contains a subgroup isomorphic to $SU(2)$.
\\ (ii) $\{D\in {\rm Der}(A)\,|\, D(x)=ax-xa, \, a\in C\}\subset {\rm Der}(A,*_{(7.i)})$
and ${\rm Der}(A,*_{(7.i)})$ contains a subalgebra isomorphic to $su(2)$ .
\end{corollary}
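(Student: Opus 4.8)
The plan is to reduce both parts to Proposition~\ref{prop:invol}: its hypotheses ask only that the automorphism (resp.\ derivation) commute with the involution $\tau$, so the whole corollary comes down to checking this commutation for conjugations and for inner derivations by elements of the quaternion subalgebra $C$. The decisive feature is that $\tau|_C=\sigma$ is the canonical involution of $C$, so that for $d\in C^\times$ one has $\bar d=\sigma(d)=N(d)d^{-1}$ and for $a\in C$ one has $a+\bar a=T(a)\in F$. These two identities are exactly what is needed to push $\tau$ through a conjugation or a commutator by an element of $C$.

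For (i), every $F_d(x)=dxd^{-1}$ with $d\in C^\times$ lies in ${\rm Aut}(A)$ because $A$ is associative. Using that $\tau$ is an anti-automorphism together with $\tau(d)=\bar d=N(d)d^{-1}$, I would compute
$$\tau(F_d(x))=\tau(d)^{-1}\tau(x)\tau(d)=N(d)^{-1}d\,\tau(x)\,N(d)\,d^{-1}=d\,\tau(x)\,d^{-1}=F_d(\tau(x)),$$
the central scalar $N(d)\in F^\times$ cancelling. Hence $F_d\circ\tau=\tau\circ F_d$, and Proposition~\ref{prop:invol}(i) gives $F_d\in{\rm Aut}(A,*_{(7)})$, which is the claimed inclusion. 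The subgroup is then obtained by restricting the group homomorphism $d\mapsto F_d$ to the unit sphere $\{d\in C\mid N(d)=1\}\cong SU(2)$ and taking its image in ${\rm Aut}(A,*_{(7)})$.

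For (ii), each $D_a(x)=ax-xa$ with $a\in C$ is an inner derivation of $A$. Writing $\bar a=T(a)-a$ with $T(a)\in F$ central and again using that $\tau$ reverses products, I would compute
$$\tau(D_a(x))=\tau(x)\bar a-\bar a\,\tau(x)=a\,\tau(x)-\tau(x)\,a=D_a(\tau(x)),$$
where the scalar contributions of $T(a)$ cancel. Thus $D_a$ satisfies the hypothesis of Proposition~\ref{prop:invol}(ii) and $D_a\in{\rm Der}(A,*_{(7)})$. Finally $a\mapsto D_a$ is a Lie algebra homomorphism on $(C,[\,\cdot\,,\,\cdot\,])$ whose kernel is $\{a\in C\mid ax=xa \text{ for all } x\in A\}=F$ (as $A$ is central over $F$), so its image is isomorphic to $C/F$, which is the trace-zero part of $C$ under the commutator bracket, i.e.\ $su(2)$; this realises $su(2)$ as a subalgebra of ${\rm Der}(A,*_{(7)})$.

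The computations themselves are short; the step I would watch most carefully is the identification of the isomorphism type of the objects produced. In (ii) this is unproblematic, since over $\mathbb{R}$ the trace-zero quaternions with commutator bracket are precisely $su(2)\cong so(3)$. In (i) one must account for the kernel of $d\mapsto F_d$ on the unit quaternions, which is $\{\pm1\}$: the image is therefore a quotient of $SU(2)$, and deciding whether to record the subgroup as $SU(2)/\{\pm1\}\cong SO(3)$ or as the group $SU(2)$ of unit quaternions from which it arises is the only delicate point in the argument.
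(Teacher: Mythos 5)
Your proposal is correct and follows essentially the same route as the paper: both proofs reduce the corollary to Proposition~3 (the statement labelled \emph{prop:invol}) by checking that conjugations $F_d$ and inner derivations $D_a$ with $d,a\in C$ commute with $\tau$, and both checks rest on exactly the identities $\tau(d)d=\sigma(d)d=N_C(d)\in F^\times$ and $a+\tau(a)=T_C(a)\in F$ (the paper phrases the commutation condition as an equivalence with $\tau(d)d$, resp.\ $a+\tau(a)$, lying in ${\rm Comm}(A)=F$, while you verify it by direct substitution; these are the same computation).

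One remark on the ``delicate point'' you flagged at the end: your caution is justified, and in fact you are more precise than the paper here. The paper settles the issue by asserting $SU(2)\cong{\rm Aut}(C)$, but the automorphism group of a real quaternion division algebra is $C^\times/F^\times\cong SU(2)/\{\pm 1\}\cong SO(3)$, exactly because of the kernel $\{\pm 1\}$ of $d\mapsto F_d$ that you identified. So the subgroup actually produced by this argument is isomorphic to $SO(3)=PSU(2)$, not $SU(2)$, and the group-level claim in (i) should be read accordingly; the Lie algebra statement in (ii) is unaffected, since $su(2)\cong so(3)$ and your identification of the image of $a\mapsto D_a$ with the trace-zero part of $C$ is exactly right.
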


\begin{proof}
(i) We know that
$H(x)=dxd^{-1}$ for some $d\in A$, therefore $H(\tau(x))=d\tau(x)d^{-1}$,
$\tau(H(x))=\tau(dxd^{-1})=\tau(d^{-1})\tau(x)\tau(d) $, so
$H(\tau(x))=\tau(H(x))$ if and only if $d\tau(x)d^{-1}=\tau(d^{-1})\tau(x)\tau(d)$ if and only if
$\tau(x)=d^{-1}\tau(d^{-1})\tau(x)\tau(d)d$  if and only if
$x\tau(d)d=\tau(d)dx$ for all $x\in A$ if and only if $\tau(d)d\in {\rm Comm}(A)=F$.
 Now  $\tau(d)d=\sigma(d)d=N_C(d)\in F^\times$ for all $d\in C^\times$.
 The canonical embedding
$$\{H\in {\rm Aut}(C)\,|\, H(x)=dxd^{-1}, \, d\in C\}\hookrightarrow \{H\in {\rm Aut}(A,*_{(7.i)})\,|\, H(x)=dxd^{-1}\}, \, H\mapsto H$$
 implies that $SU(2)\cong {\rm Aut}(C)$ is isomorphic to a subgroup of ${\rm Aut}(A,*_{(7.i)})$.
 \\ (ii) We have $D(x)=ax-xa$ for some non-zero $a\in A$, therefore $D(\tau(x))=\tau(D(x))$ if and only if
 $(a+\tau(a))\tau(x)=\tau(x)(a+\tau(a))$ for all $x\in A$, which is equivalent to $a+\tau(a)\in {\rm Comm}(A)=F$.
 Now $a+\tau(a)=a+\sigma(a)$ for all $d\in D^\times$.
The canonical embedding
$$\{D\in {\rm Der}(C)\,|\, D(x)=ax-xa, \, a\in C\}\hookrightarrow \{D\in {\rm Der}(A,*_{(7.i)})\,|\, D(x)=ax-xa\}, \, D \mapsto D$$
implies that $su(2)$ is isomorphic to a subalgebra of ${\rm Der}(A,*_{(7.i)})$.
\end{proof}

 For $c\in A$, define
$${\rm Aut}_c(A)=\{f\in {\rm Aut}(A)\,|\, f(c)=c\}
\text{ and } {\rm Der}_c(A)=\{D\in {\rm Der}(A)\,|\, D(c)=0\}.$$
If $c\in F^\times$   then ${\rm Aut}_c(A)={\rm Aut}(A)$ and ${\rm Der}_c(A)={\rm Der}(A)$.

For our next setup suppose now that $f,g\in {\rm Aut}(A)$ with $f^m=id$, $g^m=id$ for some $m\geq2$  and $c\in A$ such that
$1\not=cf(c)f^2(c)\cdots f^{m-1}(c)$, $1\not=cg(c)g^2(c)\cdots g^{m-1}(c)$ are chosen to define $\circ$.
Note that if $F$ does not contain a primitive $m$th root of unity, then the conditions on $c$ are trivially satisfied, if $c\in F$.
The proof of the next lemma is a simple computation, once we have computed the multiplication $*$ explicitly (which is straightforward but tedious):

\begin{proposition}\label{prop:aut}
 (i) If $H\in {\rm Aut}_c(A)$ such that $H\circ f=f\circ H$, $H\circ g=g\circ H$,  then $H\in {\rm Aut}(A,*)$.
\\ (ii) If $D\in {\rm Der}_c(A)$ such that $D(f(x))=f(D(x))$ and $D(g(x))=g(D(x))$ for all $x\in A$, then
$D\in {\rm Der}(A,*)$.
\end{proposition}

If the multiplication $\circ$ is defined instead by using an involution $\tau$ of $A$ and an automorphism $f$, such that $f^m=id$ for some $m\geq2$ and
$1\not=cf(c)f^2(c)\cdots f^{m-1}(c)$, or using
two different involutions of $A$, the multiplication $(A,*)$ can be explicitly computed analogously and
 results corresponding to Proposition \ref{prop:aut} and Proposition \ref{prop:invol} hold.

\subsection{Hurwitz algebras}

Let $A$ be a quaternion or octonion division algebra over $F$. Recall that  $h\in {\rm Aut}(A)$ is a \emph{reflection} of $A$, if $h^2=id$.
 Let $f,g\in {\rm Aut}(A)$ be two reflections of $A$.

\begin{proposition}\label{prop:19}
Let $c\in A$ such that $N(c)\not=1$.
 \\ (i) Let $f\not=id$, $g\not=id$, $c\in F^\times$ and $ x \circ_{(1)}y=xy-cf(x)g(y)$, then $(A,*_{(1)})$ is a division algebra.
\\
(ii) Let $A$ be a quaternion division algebra  and $f\not=id$, $g\not=id$, $cf(c)\not=1$, $cg(c)\not=1$, and
$  x\circ_{(1)} y=xy-cf(x)g(y)$, $ x\circ_{(3)} y=xy-f(x)cg(y), $ $ x\circ_{(5)} y=xy-f(x)g(y)c,$
$  x\circ_{(7)} y=xy-cg(y)f(x), $ $  x\circ_{(9)} y=xy-g(y)cf(x), $ or $    x\circ_{(11)} y=xy-g(y)f(x)c$.
then $(A,*_{i})$ is a division algebra for $i\in \{1,3,5,7,9,11\}$.
\\ (iii) Let $A$ be a quaternion division algebra, $f=id$,
 $c\in A\setminus F$ such that $cg(c)\not=1$, and
 $x\circ_{(1)} y=xy-c x g(y)$. Then $(A,*_{(1)})$ is a division algebra.
\end{proposition}

This is clear from our main result. Moreover, in all the above cases, we can compute the multiplication $*$ explicitly, sometimes under additional assumptions on $c$. Put
$u=(1-cf(c))^{-1}$, $v=(1- f(c) c )^{-1}$, $w=(1-cg(c))^{-1}$, $s=(1-g(c)c  )^{-1}\in A$.
\\
In (i), if $c^2\not=1$, then
$$ x*_{(1)}y=(1-c^2)^{-2}  (1-c^3)xy+(1+c)^{-2}c(1-c)^{-1}(  xg(y)+f(x)y+f(x)g(y)  ).$$
In (ii), if $0\not=c\in A$, $cf(c)\not=1\not =cg(c)$, then
$$ x*_{(1)}y=u (x+c f(x) ) w(y+c g(y) )
-c( u (f(x)+f(c)x )     )  s(g(y)+g(c)y )  ),$$
$$  x*_{(3)}y= v(x+ f(x)c ) w (y+c g(y))
- u (f(x)+ xf(c)  )c s (  g(y)+g(c)y )    ),$$
$$  x*_{(5)}y=
v(x+f(x)c) s (x+g(x)c)
- u (f(x)+xf(c))    ) w (g(x)+xg(c)) )c,$$
$$  x*_{(7)}y=u(x+cf(x)) w (x+cg(x))-c(  s (g(x)+g(c)x)  ) v (f(x)+f(c)x),  $$
$$  x*_{(9)}y=u(x+cf(x)) s (x+g(x)c) - w (g(x)+xg(c)) c v ( f(x)+f(c)x),   $$
$$  x*_{(11)}y=
u (x+cf(x)) s (x+g(x)c)  -(  w (g(x)+xf(c)) v (f(x)+f(c)x)  c.$$
In (iii), if  $c\in A\setminus F$ $c\not=1$, $cg(c)\not=1$, $f=id$, then
$$  x*_{(1)}y
= (1-c)^{-1}x w (y+c g(y) )-c (1-c)^{-1}x s (g(y)+g(c) y ).$$

Note that if $f=g$ in Proposition \ref{prop:19} (i), (ii), then the unital algebra $(A,*)$ is clearly an isotope of $A$.
In (i), the choice of $f=id$ or $g=id$  also yields unital algebras $(A,*)$ isotopic to $A$ (and thus isomorphic to $A$ if they are four-dimensional).

We do not explicitly compute the remaining possible cases in (ii), where we choose $id$
and a reflection in multiplications $\circ_{(i)}$ for $i\in\{1,3,5,7,9,11\}$, since they
 are analogous to the case given in (ii).
Some of these $(A,*)$ again yield algebras isomorphic to quaternion algebras. 

Suppose now that $A$ is a quaternion algebra and $0\not=c\in A$  such that $N(c)\not=1$ and
$cf(c)\not=1$,  $cg(c)\not=1$.
We look at some explicitly computed multiplications:
\\
 $(7.1)\quad x\circ_{(7.1)} y=xy-cf(y)x$, so 
$$  x*_{(7.1)}y=(1-c)^{-1}x u (y+c f(y) )-c v(f( y)+f(c) y  )(1-c)^{-1} y.$$
 $(7.2)\quad x\circ_{(7.2)} y=xy-c y g(x)$, so 
$$  x*_{(7.2)}y= w (x+c g(x) ) (1-c)^{-1}y-c (1-c)^{-1}y  s(g( x)+g(c) x  ).$$
 $(7.3)\quad x\circ_{(7.3)} y=xy-c f(y) g(x)$, so
$$  x*_{(7.3)}y=w (x+c g(x) ) u (y+c f(y) )-c  v (f( y)+f(c) y  )s(g( x)+g(c) x  ).$$
The proofs are straightforward but tedious calculations. The algebras $(A,*_{(7.1)})$, $(A,*_{(7.2)})$. $(A,*_{(7.3)})$ are  four-dimensional unital division algebras.

\begin{lemma}
 (a)
Suppose $c\in F^\times$,  $c^2\not=1$, then
any $H\in {\rm Aut}(A)$ such that $f\circ H=H\circ f$, $g\circ H=H\circ g$ lies in ${\rm Aut}(A,*_{(1)})$.
\\ (b) Let $A$ be a quaternion division algebra over $F$. Let $f,g\in {\rm Aut}(A)$, $f\not=id$, $g\not=id$
be two reflections and   $0\not=c\in A$, $cf(c)\not=1$, $cg(c)\not=1$.
Let $H\in {\rm Aut}_c (A)$.
 Then:
 \\ (i) if $H\circ f=f\circ H$, then $H\in {\rm Aut} (A,*_{(7.1)})$
 \\  (ii) if $H\circ g=g\circ H$, then $H\in {\rm Aut} (A,*_{(7.2)})$,
\\ (iii)  if $H\circ f=f\circ H$ and $H\circ g=g\circ H$, then $H\in {\rm Aut} (A, ,*_{(7.3)})$.
\end{lemma}

\begin{corollary}
 Let $A$ be a quaternion algebra and  $f\not=id$, $g\not=id$
be two reflections, $f(x)=sxs^{-1}$, $g(x)=txt^{-1}$,  and $0\not=c\in A$, $cf(c)\not=1$, $cg(c)\not=1$.
Then
$$\{H\in {\rm Aut}_c(A) \,|\, H(x)=dxd^{-1},\, d\in F(s) \}\subset \{H\in {\rm Aut}_c(A) \,|\, H\circ f=f\circ H \}\subset {\rm Aut} (A,*_{(7.1)}),$$
$$\{H\in {\rm Aut}_c(A) \,|\, H(x)=dxd^{-1},\, d\in F(t) \}\subset \{H\in {\rm Aut}_c(A) \,|\, H\circ g=g\circ H \}\subset {\rm Aut} (A,*_{(7.2)}),$$
$$\{H\in {\rm Aut}_c(A) \,|\, H(x)=dxd^{-1},\, d\in F(t)\cap F(s) \}\subset
\{H\in {\rm Aut}_c(A) \,|\, H\circ f=f\circ H, \,\,H\circ g=g\circ H \}$$
$\subset {\rm Aut} (A,*_{(7.3)}).$
\end{corollary}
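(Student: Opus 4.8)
The plan is to prove the two inclusions in each of the three displayed chains separately. In every chain the rightmost inclusion is already supplied by the preceding Lemma, part (b): for a quaternion algebra $A$ and $F\in{\rm Aut}_c(A)$, commuting with $f$ places $F$ in ${\rm Aut}(A,*_{(7.1)})$, commuting with $g$ places $F$ in ${\rm Aut}(A,*_{(7.2)})$, and commuting with both places $F$ in ${\rm Aut}(A,*_{(7.3)})$. So the only thing left to verify is the leftmost inclusion in each chain, that is, that the displayed centralizer condition on $d$ forces $F$ to commute with the relevant reflection(s).

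First I would invoke Skolem--Noether: since $A$ is a quaternion algebra, every automorphism is inner, so $F(x)=dxd^{-1}$ for some $d\in A^\times$, and likewise $f(x)=sxs^{-1}$, $g(x)=txt^{-1}$ as assumed. The elementary fact driving everything is that if two elements commute in $A$, then the inner automorphisms they induce commute.

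For the first chain, take $F\in{\rm Aut}_c(A)$ with $F(x)=dxd^{-1}$ and $d\in F(s)$. Because $F(s)$ is a commutative subalgebra of $A$ containing both $s$ and $d$, we have $ds=sd$, and therefore
$$F(f(x))=d(sxs^{-1})d^{-1}=(ds)x(ds)^{-1}=(sd)x(sd)^{-1}=s(dxd^{-1})s^{-1}=f(F(x))$$
for all $x\in A$. Hence $F\circ f=f\circ F$, which is exactly membership in the middle set; combined with the preceding Lemma this yields the first chain. The chain ending in $*_{(7.2)}$ is identical with $t$ in place of $s$: $d\in F(t)$ forces $dt=td$ and hence $F\circ g=g\circ F$. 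For the chain ending in $*_{(7.3)}$, $d\in F(s)\cap F(t)$ forces $d$ to commute with both $s$ and $t$, so $F$ commutes with both $f$ and $g$, which is precisely the hypothesis needed to apply the $*_{(7.3)}$ part of the preceding Lemma.

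The computation is exact and there is no genuine obstacle. The only point requiring a moment's care is that two inner automorphisms of a central simple algebra coincide only up to a central scalar in general; here, however, membership of $d$ in the commutative field $F(s)$ (respectively $F(t)$, respectively $F(s)\cap F(t)$) gives literal commutativity $ds=sd$ rather than mere equality up to a scalar, so no scalar correction enters and the displayed identity holds on the nose.
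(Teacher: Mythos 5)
Your proof is correct and follows exactly the route the paper intends: the paper leaves this corollary without an explicit proof because, as you observe, the second inclusion in each chain is precisely part (b) of the preceding lemma (with the octonion caveat vacuous since $A$ is quaternion), and the first inclusion is the elementary fact that $d\in F(s)$ (a commutative subalgebra) gives $ds=sd$, hence the inner automorphisms $x\mapsto dxd^{-1}$ and $x\mapsto sxs^{-1}$ commute. Nothing is missing; your remark that literal commutativity (rather than commutativity up to a central scalar) is what makes the identity hold on the nose is a correct and worthwhile point of care.
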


Let now $\tau$ be an involution on $A$  and $c\in F^\times$ such that
$1\not=c \tau(c)$.
Then for $c\in F^\times$, we obtain
$$(7.1)\quad   x\circ_{(7.1)} y=xy-c\tau(y)x,\quad  x*_{(7.1)} y= \frac{1}{(1-c)(1-c\tau(c))}( xy-c\tau(y)x+cx\tau(y)-c^2yx), $$
$$(7.2)\quad  x\circ_{(7.2)} y=xy-c y \tau(x), \quad x*_{(7.2)}y=\frac{1}{(1-c)(1-c\tau(c))}(xy-cy\tau(x)+c\tau(x)y-c^2 yx ),$$
$$(7.3)\quad x\circ_{(7.3)} y=xy-c  \tau(x)\tau(y), $$
 $$x*_{(7.3)}y= (1-c\tau(c))^{-2} ( (1-c\tau(c)^{2})xy-c(1-c)\tau(x)\tau(y)+c(1-\tau(c))(x\tau(y)+\tau(x)y)  ).$$

\begin{proposition} \label{prop:25}
Let $A$ be a quaternion or octonion division algebra and $c\in F^\times$ such that $c\not=\pm 1$
and $c\tau(c)\not=1$. Let $(A,*)$ be as in cases $(7.1)$, $(7.2)$ or $(7.3)$.
\\ (i) If $f\in {\rm Aut}(A)$ such that $f\circ\tau=\tau\circ f$ then $f\in {\rm Aut}(A,*)$.
\\ (ii) If $\tau$ is the canonical involution of $A$ then $(A,*)$ is a division algebra, and
$$ SU(2)\cong {\rm Aut}(A)\subset  {\rm Aut}(A,*)$$
if $A$ is a quaternion algebra and
$$ G_2\cong {\rm Aut}(A)\subset  {\rm Aut}(A,*)$$
if $A$ is an octonion algebra.
\\ (iii) If $\tau$ is the canonical involution of $A$ and $A$ a quaternion algebra then
$$ su(2)\cong {\rm Der}(A)\subset  {\rm Der}(A,*).$$
\end{proposition}

\begin{proof}
(i) and (iii) are straightforward calculations, (ii)  follows from (i) and \cite[p.~85]{M}.
\end{proof}

More generally, similar considerations yield the following:

\begin{proposition}\label{prop:26}
Let $A$ be a quaternion or octonion division algebra over $F$ with canonical involution $\sigma$, $c\in A$ such that
$N(c)\not=1$ and  $f,g\in \{ id,\sigma\}$. Take any possible definition of $\circ$ using these $f$ and $g$.
If $A$ has dimension 8, we additionally assume that $c\in F^\times$. Then
$${\rm Aut}_c(A)\subset {\rm Aut}(A,*).$$
 In particular, for $c\in F^\times$,
$$ SU(2)\cong {\rm Aut}(A)\subset  {\rm Aut}(A,*)$$
if $A$ is a quaternion algebra and
$$ G_2\cong {\rm Aut}(A)\subset  {\rm Aut}(A,*)$$
if $A$ is an octonion algebra.

\end{proposition}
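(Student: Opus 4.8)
The plan is to take an arbitrary $\phi \in {\rm Aut}_c(A)$ and verify directly that $\phi$ preserves the twisted product $*$; this yields the inclusion ${\rm Aut}_c(A) \subset {\rm Aut}(A,*)$, and the two isomorphism statements then follow by specializing to $c \in F^\times$. The structural fact I would isolate first is that every automorphism of a Hurwitz algebra commutes with the canonical involution $\sigma$. Indeed, by uniqueness of the norm $\phi$ preserves $N$, hence the trace $T$, and since $\sigma(x) = T(x)1_A - x$ we obtain $\phi(\sigma(x)) = T(x)1_A - \phi(x) = \sigma(\phi(x))$. Consequently $\phi \circ f = f \circ \phi$ and $\phi \circ g = g \circ \phi$ for every choice $f,g \in \{id,\sigma\}$, while $\phi(c) = c$ by definition of ${\rm Aut}_c(A)$ and $\phi(1_A) = 1_A$.

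Next I would check that $\phi$ is already an automorphism of $(A,\circ)$ for each of the twelve multiplications. Every $\circ_{(i)}$ has the shape $x \circ y = xy - P$, where $P$ is a bracketed product of $c$, one of $f(x),f(y)$ and one of $g(x),g(y)$. Applying $\phi$ and using that $\phi$ is multiplicative for the original product, together with $\phi(c) = c$ and the commutation relations above, carries each factor of $P$ to the corresponding factor with $x,y$ replaced by $\phi(x),\phi(y)$; hence $\phi(x \circ y) = \phi(x) \circ \phi(y)$. I emphasize that this step uses no associativity: it relies only on $\phi$ preserving every product, commuting with $f$ and $g$, and fixing $c$.

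To pass from $(A,\circ)$ to $(A,*)$ I would use $x * y = R_e^{-1}(x) \circ L_e^{-1}(y)$ with $e = 1_A$. Since $f,g$ are isometries their similarity factors are $\alpha = \beta = 1$, so the hypothesis $N(c) \neq 1$ is exactly the condition $N(c)\neq 1/(\alpha\beta)$ of Theorem \ref{thm:main}; thus $(A,\circ)$ is a division algebra and $L_e, R_e$ are bijective. From $\phi(e) = e$ and the previous paragraph one gets $\phi(R_e(x)) = \phi(x \circ e) = \phi(x) \circ e = R_e(\phi(x))$, so $\phi R_e = R_e \phi$ and likewise $\phi L_e = L_e \phi$; inverting these identities shows that $\phi$ commutes with $R_e^{-1}$ and $L_e^{-1}$. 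Therefore $\phi(x * y) = \phi(R_e^{-1}(x)) \circ \phi(L_e^{-1}(y)) = R_e^{-1}(\phi(x)) \circ L_e^{-1}(\phi(y)) = \phi(x) * \phi(y)$, proving $\phi \in {\rm Aut}(A,*)$.

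For the two displayed isomorphisms I would specialize to $c \in F^\times$, where ${\rm Aut}_c(A) = {\rm Aut}(A)$, and insert the known automorphism groups of the quaternion and octonion division algebras, namely $SU(2)$ and $G_2$ as already used in Proposition \ref{prop:25}. The point needing care is the octonion case: the explicit inverses of $L_e$ and $R_e$ from the earlier lemmas are valid only on the right nucleus unless $c \in F^\times$, which is exactly why the hypothesis restricts to $c \in F^\times$ in dimension $8$. The abstract argument above, by contrast, needs only that $L_e, R_e$ are invertible and that $\phi$ commutes with them, so non-associativity is not in itself an obstacle to the inclusion. I expect the single genuinely delicate verification to be the commutation $\phi \circ \sigma = \sigma \circ \phi$, on which the entire reduction rests.
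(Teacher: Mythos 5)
Your proposal is correct and takes essentially the same route as the paper: the paper's entire proof is the single observation that the canonical involution commutes with every automorphism of $A$, which is precisely the key lemma you isolate (via uniqueness of the norm), and the remaining verification---that an automorphism fixing $c$ and commuting with $f,g$ preserves each $\circ_{(i)}$, commutes with $L_e,R_e$, and hence preserves $*$---is exactly the ``simple computation'' the paper delegates to Proposition~\ref{prop:aut} and the lemma at the start of Section~4. Your side remark that the abstract argument never actually uses $c\in F^\times$ in the octonion case is also sound, but it only strengthens the stated result and does not change the approach.
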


\begin{proof} This follows from the fact that the canonical involution commutes with any automorphism of $A$.
\end{proof}

We point out that in the above setting,  $(A,*)$ is a division algebra if and only if $N(c)\not= 1$.
Using the classification of real division algebras \cite{B-O1} we obtain:

\begin{corollary}\label{cor:real}
Let $F=\mathbb{R}$ and $A$ a quaternion or octonion division algebra over $\mathbb{R}$  with canonical involution $\sigma$.
Let  $c\in \mathbb{R}^\times$ such that $c\not=\pm 1$. Take any possible definition of $\circ$ using $f,g\in \{ id,\sigma\}$. Then
$$ su(2)\cong {\rm Der}(A,*)$$
if $A$ is a quaternion division algebra and
$$ G_2\cong  {\rm Der}(A,*)$$
if $A$ is an octonion division algebra. In particular, this is true for the multiplications  $(7.1), (7.2)$, $(7.3)$ with $\tau=\sigma$
from Proposition \ref{prop:25}.
\end{corollary}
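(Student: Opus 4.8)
The plan is to trap ${\rm Der}(A,*)$ between a lower bound coming from the earlier propositions and an upper bound coming from the classification of real division algebras, and to check that the two meet. Since $f,g\in\{id,\sigma\}$ are isometries of $N$ we have $\alpha=\beta=1$, and because $c\in\mathbb{R}^\times$ with $c\ne\pm1$ we get $N(c)=c^2\ne 1$; hence $(A,\circ)$ and its unital isotope $(A,*)$ are division algebras by Theorem \ref{thm:main}. Thus $(A,*)$ is a genuine real division algebra of dimension $4$ in the quaternion case and $8$ in the octonion case.

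First I would produce the lower bound. As $c\in\mathbb{R}^\times$ lies in $F$, we have ${\rm Aut}_c(A)={\rm Aut}(A)$, so Proposition \ref{prop:26}, which applies to \emph{any} $\circ$ built from $f,g\in\{id,\sigma\}$ (and which permits $c\in F^\times$ in the octonion case), gives the inclusion of algebraic groups ${\rm Aut}(A)\subset {\rm Aut}(A,*)$. Passing to Lie algebras and using that over $\mathbb{R}$ the Lie algebra of the algebraic group ${\rm Aut}(A)$ is a subalgebra of ${\rm Der}(A)$ of equal dimension, hence all of ${\rm Der}(A)$, yields ${\rm Der}(A)\subset {\rm Der}(A,*)$. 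Since ${\rm Der}(A)\cong su(2)$ (dimension $3$) for the real quaternions and ${\rm Der}(A)\cong G_2$ (dimension $14$) for the real octonions, we obtain $su(2)\subset {\rm Der}(A,*)$, respectively $G_2\subset {\rm Der}(A,*)$.

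The upper bound is the crucial step and is where the classification \cite{B-O1} enters: the derivation algebra of a $4$-dimensional real division algebra has dimension at most $3$, and that of an $8$-dimensional one at most $14$, these maxima being attained only by algebras isotopic to $\mathbb{H}$ and $\mathbb{O}$. Combining this with the lower bound forces the dimensions to agree, and a Lie subalgebra that has the same finite dimension as the surrounding algebra must coincide with it. Therefore ${\rm Der}(A,*)\cong su(2)$ in the quaternion case and ${\rm Der}(A,*)\cong G_2$ in the octonion case. The closing assertion about the multiplications $(1),(7.1),(7.2)$ with $\tau=\sigma$ is then just the special case already recorded in Proposition \ref{prop:25}.

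I expect the only real difficulty to lie in the upper bound. The elementary half of the argument delivers nothing beyond the containment of $su(2)$ (resp. $G_2$) inside ${\rm Der}(A,*)$; without the Benkart--Osborn dimension bounds for derivation algebras of real division algebras one could not exclude the possibility that some of the twelve twisted multiplications produces an algebra with strictly larger symmetry. Everything else reduces to bookkeeping with Proposition \ref{prop:26} and the real-dimension identity $\dim{\rm Aut}=\dim{\rm Der}$.
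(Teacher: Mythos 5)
Your proposal is correct and follows essentially the same route as the paper: the paper likewise obtains the containment ${\rm Aut}(A)\subset{\rm Aut}(A,*)$ from Proposition \ref{prop:26} (with ${\rm Aut}_c(A)={\rm Aut}(A)$ since $c\in\mathbb{R}^\times$), passes to derivation algebras via $\dim{\rm Aut}=\dim{\rm Der}$ over $\mathbb{R}$, and then invokes the Benkart--Osborn classification \cite{B-O1} to bound $\dim{\rm Der}(A,*)$ by $3$ (dimension $4$) resp. $14$ (dimension $8$), forcing equality. Your explicit verification that $N(c)=c^2\neq 1$ guarantees the division property, and the reduction of the final claim to Proposition \ref{prop:25}, match the paper's (very terse) argument exactly.
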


Concerning subalgebras, since $\sigma$ restricted to any subalgebra of $A$ is the canonical involution of that subalgebra, we state:

\begin{lemma}
Let $A$ be a quaternion or octonion division algebra over $F$.
 Take any possible definition of $\circ$ using $f,g\in \{ id,\sigma\}$.
\\ (i) If $c\in F^\times$ then every subalgebra $B$ of $A$ yields a subalgebra $(B,\circ)$ of $(A,\circ)$ and
$(B,*)$ of $(A,*)$.
\\ (ii) If $c\in A\setminus F$ then every subalgebra $B$ of $A$ such that $c\in B$
 yields a subalgebra $(B,\circ)$ of $(A,\circ)$ and $(B,*)$ of $(A,*)$.
 \\ (iii) Suppose $D$ is a subalgebra of $A$. If $c\not\in D$ then $(D,\circ)$ is not a subalgebra of $(A,\circ)$.
\end{lemma}

\begin{proof}
We only show (iii), as the rest is trivial: Assume that $(D,\circ)$ is a subalgebra of $(A,\circ)$, then
$x\circ y\in D$ for all $x,y\in D$, so in particular, $1\circ 1=1-c\in D$ and thus also $c\in D$, contradiction.
\end{proof}

%
\section{Automorphisms and derivations of the non-unital algebras $(A,\circ)$}
%

Let $A$ be an algebra over $F$ with a nondegenerate multiplicative norm $N$ of degree $n$, and $0\not=c\in A$.

\begin{lemma} Let $f,g\in O(N)$.
\\ (i) Let $H\in {\rm Aut}_c(A)$
 such that $H(f(x))=f(H(x))$ and $H(g(x))=g(H(x))$ for all
$x\in A$. Then $H\in {\rm Aut}(A,\circ)$.
\\ (ii)  If $D\in {\rm Der}(A)$ such that $D(c)=0$, $D(f(x))=f(D(x))$ and $D(g(x))=g(D(x))$ for all $x\in A$, then
$D\in {\rm Der}(A,\circ)$.
\end{lemma}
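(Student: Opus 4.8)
The plan is to verify directly the defining identity of an automorphism, respectively a derivation, of $(A,\circ)$, working with a single representative multiplication and then observing that the argument is insensitive to the particular choice among the twelve. Recall that each $\circ$ has the shape $x\circ y = xy - P(x,y)$, where $P(x,y)$ is a fixed parenthesization of a threefold product of $c$, $f(x)$ and $g(y)$ (or $f(y)$ and $g(x)$ in the swapped cases $\circ_{(7)},\dots,\circ_{(12)}$). Since every map involved is $F$-linear, both assertions reduce to understanding how $H$, respectively $D$, interacts with the single term $P(x,y)$.

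For (i) I would take $\circ_{(1)}$, so $x\circ y = xy - c(f(x)g(y))$. Applying $H$ and using that $H$ is an algebra automorphism of $A$ gives $H(x\circ y) = H(x)H(y) - H(c)\bigl(H(f(x))H(g(y))\bigr)$. Now I substitute $H(c)=c$ (as $H\in{\rm Aut}_c(A)$) together with the commuting relations $H(f(x))=f(H(x))$ and $H(g(y))=g(H(y))$; the right-hand side becomes exactly $H(x)\circ H(y)$. The crucial point is that an algebra homomorphism respects every product and every bracketing, so the identical computation goes through verbatim for each of the other eleven multiplications: $H$ distributes across the bracketing of $P$, fixes the factor $c$, and commutes past $f$ and $g$ on the remaining two factors (which may be $f(y),g(x)$ in the swapped cases, but the commuting hypotheses cover these as well).

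For (ii) I would run the analogous computation with the Leibniz rule in place of multiplicativity. With $\circ_{(1)}$ again, $D(x\circ y) = D(x)y + xD(y) - D\bigl(c(f(x)g(y))\bigr)$, and expanding the last term by Leibniz yields $D(c)(f(x)g(y)) + c\bigl(D(f(x))g(y)+f(x)D(g(y))\bigr)$. The hypothesis $D(c)=0$ kills the first summand, and $D(f(x))=f(D(x))$, $D(g(y))=g(D(y))$ turn the rest into $c\bigl(f(D(x))g(y)\bigr)+c\bigl(f(x)g(D(y))\bigr)$; collecting terms shows $D(x\circ y)=D(x)\circ y + x\circ D(y)$. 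As in (i), this is purely local: the Leibniz rule applies to each product regardless of bracketing, $D(c)=0$ makes the $c$-factor inert, and $D$ commutes with $f$ and $g$ on the two variable factors, so the same bookkeeping settles all twelve cases.

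There is no serious obstacle here; the only thing to be careful about is handling all twelve parenthesizations and the swapped orderings uniformly. This is resolved by the observation that both an automorphism and a derivation act factor by factor --- the former multiplicatively, the latter via Leibniz --- so that the precise way $c$, $f(x)$ and $g(y)$ are grouped never enters the verification. I note in passing that the hypothesis $f,g\in O(N)$ plays no role in this particular lemma; only the commuting relations and the fixing, respectively annihilation, of $c$ are used.
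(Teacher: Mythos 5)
Your proof is correct and follows essentially the same route as the paper: the paper's proof of (i) is exactly your computation for $\circ_{(1)}$ with the remark that the same argument applies to all $\circ_{(i)}$, and for (ii) the paper simply states it is ``a simple computation,'' which is precisely the Leibniz-rule verification you carry out. Your closing observation that the hypothesis $f,g\in O(N)$ is never used is also accurate.
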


\begin{proof}
(i) Consider $x\circ_{(1)} y=xy-c(f(x)g(y))$.
For $H\in {\rm Aut}(A)$, $H(x\circ_{(1)} y)=H(x)H(y)-H(c)(H(f(x))H(g(y)))$ and $H(x)H(y)=H(x)H(y)-c(f(H(x))g(H(y)))$.
So if $H(c)=c$ and $H(f(x))=f(H(x))$ as well as $H(g(x))=g(H(x))$ for all $x\in A$, $H\in {\rm Aut}(A,\circ)$.
A similar argument applies for all $\circ_{(i)}$.
\\ (ii) The proof is a simple computation.
\end{proof}

\begin{proposition} \label{cor:35}
Let $A$ be a central simple associative algebra over $F$ and $f,g\in {\rm Aut}(A)$, i.e. $f(x)=sxs^{-1}$ and
$g(x)=txt^{-1}$ where $s,t\in A^\times$.
\\ (i) If $c\in A\setminus F$ then
$$\{ d_a\in {\rm Der}(A)\,|\, a\in F(s)\cap F(t) \cap  F(c) \}\subset\{ d_a\in {\rm Der}_c(A)\,|\, a\in F(s)\cap F(t)\}$$
$$\subset \{ d_a\in {\rm Der}_c(A)\,|\, f,g\in {\rm Aut}_a(A)  \}\subset {\rm Der}(A,\circ).$$
If $c\in F^\times$ then
$$\{ d_a\in {\rm Der}(A)\,|\, a\in F(s) \cap F(t)  \}\subset
\{ d_a\in {\rm Der}(A)\,|\, f,g\in {\rm Aut}_a(A)  \}\subset {\rm Der}(A,\circ).$$
\\ (ii)  Suppose $c\in A\setminus F$.
If  $s,t\in F(c)$ or $c\in F(s)\cap F(t)$ then
$d_c\in {\rm Der}(A,\circ).$
\end{proposition}

\begin{proof}
(i)
Every derivation of $A$ has the form
$d_a(x)=ax-xa$ for all $x\in A$. Let $f(x)=sxs^{-1}$ and
$g(x)=txt^{-1}$, $s,t\in A^\times$. Then $D(f(x))=f(D(x))$ and $D(g(x))=g(D(x))$ implies that if
$f(a)=a$, $g(a)=a$, it follows that $d_a\in {\rm Der}(A,\circ)$. Now  $f(a)=a$ is the same as $sa=as$, so that this holds for all
$a\in F(s)$, and analogously, $g(a)=a$ is the same as $ta=at$, so that this holds for all
$a\in F(t)$.  If $c\in F^\times$ then $D(c)=0$. The assertions follow.
\\ (ii) We know that $0\not=d_c\in  {\rm Der}_c(A)$ by (i). Now if $s,t\in F(c)$ then $f(c)=c$ and $g(c)=c$ and so
$d_c\in {\rm Der}(A,\circ)$. Alternatively, if $c\in F(s)\cap F(t)$ then also $f(c)=c$ and $g(c)=c$.
\\ (iii) is now clear.
\end{proof}

\begin{example} \label{ex:1}
Let $F=\mathbb{R}$, $A=\mathbb{H}$   and $f,g\in S(N)$ with similarity
factors $\alpha$ and $\beta$.  For all $0\not=c\in \mathbb{H}$ such that $N(c)\not=1/\alpha\beta$,
 $(\mathbb{H},\circ)$ is a division algebra.
 Suppose that $f,g\in {\rm Aut}(\mathbb{H})$ and $f(x)=sxs^{-1}$,
$g(x)=txt^{-1}$ with $s,t\in \mathbb{H}^\times$.
By Corollary \ref{cor:35} and \cite{B-O1},
 then
 ${\rm dim}\, {\rm Der}(\mathbb{H},\circ)=1$ or ${\rm Der}(\mathbb{H},\circ)\cong su(2)$.

 If $f|_\mathbb{C},g|_\mathbb{C}\in S(N_\mathbb{C})$ and $c\in \mathbb{C}^\times$, then $(\mathbb{H},\circ)$ contains the two-dimensional subalgebra $(\mathbb{C},\circ)$.
 Choose any $d\in \mathbb{H}$ and apply Kaplanski's trick. This yields a unital
 division algebra $(\mathbb{H},*_d)$.  If $d\in \mathbb{C}^\times$,
$(\mathbb{C},*_d)$ is a unital subalgebra isomorphic to $\mathbb{C}$.
\end{example}

For $F=\mathbb{R}$, we have
${\rm dim}\,{\rm Aut}(A)={\rm dim}\,{\rm Der}(A)$

 \begin{example} \label{ex:2}
 Let $F=\mathbb{R}$ and $\mathbb{O}$ be Cayley's octonion division algebra with norm $N$.
\\ (i)
Let $c\in \mathbb{R}^\times$ and
$f,g\in \{ id,\sigma\}$. Then $(\mathbb{O},\circ)$
 is a division algebra if and only if $N(c)\not=1$. Moreover,
$G_2\cong {\rm Aut}(\mathbb{O})\subset {\rm Aut}(\mathbb{O},\circ)$ by Proposition \ref{prop:26}.
Thus
$$G_2\cong {\rm Aut}(\mathbb{O},\circ) \text{ and } G_2\cong {\rm Der}(\mathbb{O},\circ)$$
by \cite{B-O1}.
\\ (ii) Let $f,g\in S(N)$ with similarity
factors $\alpha,\beta$.
For all $0\not=c\in \mathbb{O}$ such that $N(c)\not=1/\alpha\beta$, $(\mathbb{O},\circ)$ is a division algebra.

If $f|_\mathbb{H},g|_\mathbb{H}\in S(N)$
and $c\in \mathbb{H}^\times$, $(\mathbb{O},\circ)$ contains the four-dimensional
 subalgebra $(\mathbb{H},\circ)$.
Then choose any $0\not=d\in \mathbb{O}$  to
  apply Kaplanski's trick to $(\mathbb{O},\circ)$. This yields a unital division algebra $(\mathbb{O},*_d)$.
If additionally $d\in \mathbb{H}$,
$(\mathbb{H},*_d)$ is a unital  four-dimensional subalgebra of $(\mathbb{O},*_d)$ and
contains $\mathbb{C}$ as subalgebra
if $f|_\mathbb{C},g|_\mathbb{C}\in S(N_\mathbb{C})$ and $c\in \mathbb{C}$.
 \end{example}


\end{document}